     \def\section{\@startsection{section}{1}%
     \z@{.7\linespacing\@plus\linespacing}{.5\linespacing}%
     {\bfseries
     \centering
     }}
     \def\@secnumfont{\bfseries}
\newtheorem{theorem}{Theorem}[section]
\newtheorem{lemma}[theorem]{Lemma}
\newtheorem{proposition}[theorem]{Proposition}
\theoremstyle{definition}
\theoremstyle{remark}
\newtheorem{remark}[theorem]{Remark}
\numberwithin{equation}{section}
\begin{document}
\title{Exponential ergodicity of the jump-diffusion CIR process}
\author[P. Jin]{Peng Jin}
\address{Peng Jin: Fachbereich C, Bergische Universit\"at Wuppertal, 42119 Wuppertal, Germany}
\email{jin@uni-wuppertal.de}

\author[B. R\"udiger]{Barbara R\"udiger}
\address{Barbara R\"udiger: Fachbereich C, Bergische Universit\"at Wuppertal, 42119 Wuppertal, Germany}
\email{ruediger@uni-wuppertal.de}

\author[C. Trabelsi]{Chiraz Trabelsi}
\address{Chiraz Trabelsi: Department of Mathematics, University of Tunis El-Manar, 1060 Tunis, Tunisia}
\email{trabelsichiraz@hotmail.fr}

\keywords{CIR model with jumps, exponential ergodicity, Forster-Lyapunov functions, stochastic differential equations}
\subjclass[2000]{primary: 60H10; Secondary: 60J60}
\begin{abstract}
In this paper we study the jump-diffusion CIR process (shorted as JCIR), which is an extension of the classical CIR model. The jumps of the JCIR are introduced with the help of a pure-jump L\'evy process $(J_t, t \ge 0)$. Under some suitable conditions on the L\'evy measure of $(J_t, t \ge 0)$, we derive a lower bound for the transition densities of the JCIR process. We also find some sufficient condition guaranteeing the existence of a Forster-Lyapunov function for the JCIR process, which allows us to prove its exponential ergodicity.
\end{abstract}
\maketitle

\section{Introduction}
The Cox-Ingersoll-Ross model (or CIR model) was introduced in \cite{MR785475} by John C. Cox, Jonathan E. Ingersoll and Stephen A. Ross
in order  to describe the random evolution of interest rates. The CIR model captures many features of the real world interest rates. In particular, the interest rate in  the CIR model is non-negative and mean-reverting. Because of its vast applications  in mathematical finance, some extenssions of the CIR model have been introduced and studied, see e.g. \cite{Duffie01,MR1850789,2013arXiv1301.3243L}.

In this paper we study an extension of the CIR model including jumps, the so-called jump-diffusion CIR process (shorted as JCIR). The JCIR process is defined as the unique strong solution $X:=(X_t, t\geq 0)$ to the following stochastic differential equation
\begin{equation}\label{jcir}
     dX_{t}=a(\theta-X_{t})dt +\sigma \sqrt{X_{t}}dW_{t}+dJ_t, \quad X_0\geq 0,
 \end{equation}
where  $a,\sigma>0$, $\theta \ge 0$ are constants,  $(W_{t},t\geq 0)$ is a $1$-dimensional Brownian motion and $(J_t,t\geq 0)$ is an independent pure-jump L\'evy process with its L\'evy measure $\nu$ concentrating on $(0,\infty)$ and satisfying
\begin{equation}\label{levy measure}
\int_{(0,\infty)}(\xi \wedge 1) \nu(d\xi) < \infty.
\end{equation}
The initial value $X_0$ is assumed to be independent of $(W_{t},t\geq 0)$ and $(J_t,t\geq 0)$.
We assume that all the above processes are defined on some filtered probability space $(\Omega, \mathcal{F}, (\mathcal{F})_{t \ge 0}, P)$. We remark that the existence and uniqueness of strong solutions to (\ref{jcir}) is guaranteed by the main results of \cite{MR2584896}.

The term $a(\theta-X_{t})$ in (\ref{jcir}) defines a mean reverting drift pulling the process towards its long-term value $\theta$ with a speed of adjustment equal to $a$. Since the diffusion coefficient in the SDE (\ref{jcir}) is degenerate at $0$ and only positive jumps are allowed, the JCIR process $(X_t, t\geq 0)$ stays non-negative if $X_0\geq 0$. This fact can be shown rigorously with the help of comparison theorems for SDEs,  for more details we refer the readers to \cite{MR2584896}.

The JCIR defined in (\ref{jcir}) includes the basic affine jump-diffusion (or BAJD) as a special case, in which the L\'evy process $(J_t,t\geq 0)$ takes the form of a compound poisson process with exponentially distributed jumps. The BAJD was introduced by Duffie and G\^{a}rleanu \cite{Duffie01} to describe the dynamics of default intensity.  It was also used in \cite{MR1850789} and \cite{MR2390186} as a short-rate model.  Motivated by some applications in finance, the long-time behavior of the BAJD has been well studied. According to the main results of  \cite{MR2779872,MR2390186}, the BAJD possesses a unique invariant probability measure, whose distributional properties was later investigated in \cite{PRT,MR2922631}. We remark that the results in \cite{MR2779872,MR2922631} are very general and hold for a large class of affine process with state space $\mathbb{R}_+$. The existence and some approximations of the transition densities of the BAJD can be found in \cite{MR3084047}. A closed formula of the transition densities of the BAJD was recently derived in \cite{PRT}.

In this paper we are interested in two problems concerning the JCIR defined in (\ref{jcir}). The first one is the transition density estimates of the JCIR. Our first main result of this paper is the derivation of a lower bound on the transition densities of the JCIR. Our idea to establish the lower bound of the transition densities is as follows. Like the BAJD, the JCIR is also an affine processes in $\mathbb{R}_+$. Roughly speaking, affine processes are Markov processes for which the logarithm of the characteristic function of the process is affine with respect to the initial state.  Affine processes on the canonical state space $\mathbb{R}^m_+\times \mathbb{R}^{n}$ have been investigated in \cite{MR1994043,MR1850789,MR2851694,MR3040553}. Based on the exponential-affine structure of the JCIR, we are able to compute its characteristic function explicitly. This enables us further to represent the distribution of the JCIR as the convolution of two distributions. The first distribution is known and coincides with the distribution of the CIR model. However, the second distribution is more complicated. We will give some sufficient condition such that the second distribution is singular at the point $0$. In this way we derive a lower bound estimate of the transition densities of the JCIR.

The other problem we consider here is the exponential ergodicity of the JCIR. According to the main results of \cite{MR2779872} (see also \cite{MR2390186}), the JCIR has a unique invariant probability measure $\pi$, given that some integrability condition on the l\'evy measure of $(J_t,t\geq 0)$ is satisfied. Under some sharper assumptions we show in this paper that the convergence of the law of the JCIR process to its invariant probability measure under the total variation norm is exponentially fast, which is called the exponential ergodicity.  Our method is the same as in \cite{PRT}, namely we show the existence of a Forster-Lyapunov function and then apply the general framework of \cite{MR1174380,MR1234294,MR1234295} to get the exponential ergodicity.

The remainder of this paper is organized as follows. In Section 2 we collect some key facts on the JCIR and in particular derive its characteristic function. In Section 3 we study the characteristic function of the JCIR and prove a lower bound of its transition densities. In Section 4 we show the  existence of a Forster-Lyapunov function and the exponential ergodicity for the JCIR.

\section{Preliminaries}
In this section we use the exponential-affine structure of the JCIR process to derive its characteristic functions.

We recall that the JCIR process $(X_t, t\geq 0)$ is defined to be the solution to (\ref{jcir}) and it  depends obviously on its initial value $X_0$. From now on we denote by $(X^x_t, t\geq 0)$ the JCIR process started from a initial point $x \ge 0$, namely
\begin{equation}\label{jcirx}
     dX^x_{t}=a(\theta-X^x_{t})dt +\sigma \sqrt{X^x_{t}}dW_{t}+dJ_t, \quad X^x_0=x.
 \end{equation}

Since the JCIR process is an affine process, the corresponding characteristic functions of $(X^x_t, t\geq 0)$ is of exponential-affine form:
\begin{equation}\label{characteristic function}
E\big[e^{uX^x_t}\big]=e^{\phi(t,u)+x\psi(t,u)}, \quad u \in \mathcal{U}:=\{u \in \mathbb{C}: \Re u \le 0\},
\end{equation}
where the functions $\phi(t,u)$ and $\psi(t,u)$ in turn are given as solutions of the generalized Riccati equations
\begin{equation}\label{eqRiccati}
\begin{cases} \partial_t \phi(t,u)=F\big(\psi(t,u)\big), \quad & \phi(0,u)=0, \\
 \partial_t \psi(t,u)=R\big(\psi(t,u)\big), \quad & \psi(0,u)=u \in \mathcal{U}, \\
\end{cases}
\end{equation}
with the functions $F$ and $R$ given by
\begin{align*}\label{functions F&R}
F(u)=&a\theta u+\int_{(0,\infty)}(e^{u\xi}-1)\nu(d\xi),  \\
R(u)=&\frac{\sigma^2u^2}{2}-au.
\end{align*}
Solving the system (\ref{eqRiccati}) give $\phi(t,u)$ and $\psi(t,u)$ in explicit form:
\begin{equation}\label{psi}
\psi(t,u)=\frac{ue^{-at}}{1-\frac{\sigma^2}{2a}u(1-e^{-at})}
\end{equation}
and
\begin{equation}\label{phi}
\phi(t,u)=-\frac{2a\theta}{\sigma^2}\log \big(1-\frac{\sigma^2}{2a}u(1-e^{-at})\big)
+ \int_0^t \int_{(0,\infty)}\Big(e^{\xi\psi(s,u)}-1\Big)\nu(d\xi)ds.
\end{equation}
Here the complex-valued logarithmic function $\log(\cdot)$ is understood to be its main branch defined on $\mathbb{C}-\{0\}$.
According to (\ref{characteristic function}),  (\ref{psi}) and (\ref{phi}) the characteristic functions of $(X^x_t, t\geq 0)$ is given by
\begin{align}\label{eqcharacteristic}
E[e^{uX^x_t}]=&  \big(1-\frac{\sigma^2}{2a}u(1-e^{-at})\big)^{-\frac{2a\theta}{\sigma^2}} \cdot \exp \Big (\frac{xue^{-at}}{1-\frac{\sigma^2}{2a}u(1-e^{-at})} \Big) \nonumber \\
& \qquad \cdot \exp \Big ( \int_0^t \int_0^{\infty}\Big(e^{\xi\psi(s,u)}-1\Big)\nu(d\xi)ds\Big)
\end{align}
for any $u \in \mathcal{U}$. Here the complex-valued power function $z^{-\frac{2a\theta}{\sigma^2}}:=\exp(-\frac{2a\theta}{\sigma^2} \log z)$ is also understood to be its main branch defined on $\mathbb{C}-\{0\}$.

We denote by
\begin{align}\label{IandII}
I:=&  \big(1-\frac{\sigma^2}{2a}u(1-e^{-at})\big)^{-\frac{2a\theta}{\sigma^2}} \cdot \exp \Big (\frac{xue^{-at}}{1-\frac{\sigma^2}{2a}u(1-e^{-at})} \Big)  \nonumber \\
II:=&  \exp \Big ( \int_0^t \int_0^{\infty}\Big(e^{\xi\psi(s,u)}-1\Big)\nu(d\xi)ds\Big)
\end{align}
the first and the second line of the characteristic function (\ref{eqcharacteristic}) respectively.\\


According to the parameters JCIR process we look at two special cases:

\subsection{Special case i): $\nu = 0$, no jumps}

 Notice that $\nu =0$ just yields the classical CIR model $(Y_t, t\geq 0)$ satisfying the following stochastic differential equation
 \begin{equation}\label{cir}
     dY^x_{t}=a(\theta-Y^x_{t})dt +\sigma \sqrt{Y^x_{t}}dW_{t}, \quad Y^x_0=x\geq 0.
 \end{equation}
It follows from (\ref{eqcharacteristic}) that the characteristic function of $(Y^x_t, t\geq 0)$ coincides with $I$, namely for $u \in \mathcal{U}$
 \begin{align}\label{eqcharaCIR}
E[e^{uY^x_t}]=&  \big(1-\frac{\sigma^2}{2a}u(1-e^{-at})\big)^{-\frac{2a\theta}{\sigma^2}} \cdot \exp \Big(\frac{xue^{-at}}{1-\frac{\sigma^2}{2a}u(1-e^{-at})} \Big) \\
=& I \nonumber
\end{align}
It is well known that the classical CIR model $(Y^x_t, t\geq 0)$ has transition density functions $f(t,x,y)$ given by
\begin{equation}\label{cirdensity}
f(t,x,y) =\kappa e^{-u-v}\Big ( \frac{v}{u} \Big)^{\frac{q}{2}} I_{q}\big (2(uv)^{\frac{1}{2}} \big)
\end{equation}
for $t>0, x>0$ and $y\ge0$, where
\begin{align*}
\kappa \equiv & \frac{2a}{\sigma^{2}\Big (1-e^{-at}\Big)}, &u \equiv \kappa xe^{-at},\\
v \equiv & \kappa y, &q \equiv  \frac{2a\theta}{\sigma^{2}}-1,
\end{align*}
and $I_{q}(\cdot)$ is the modified Bessel function of the first kind of order $q$. For $x=0$ the formula of the density function $p(t,x,y)$ is given by
\begin{equation}\label{cirdensity0}
p(t,0,y)= \frac{c}{\Gamma (q+1)}v^qe^{-v}
\end{equation}
for $t>0$ and $y \ge 0$.

\subsection{Special case ii): $\theta$=0 and $x=0$}

We denote by $(Z_t, t\geq 0)$ the JCIR process given by
\[
 dZ_{t}=-aZ_{t}dt +\sigma \sqrt{Z_{t}}dW_{t}+dJ_t, \quad Z_0=0.\]

In this particular case the characteristic functions of $(Z_t, t\geq 0)$ is equal to $II$, namely for $u \in \mathcal{U}$
\begin{equation*}
E[e^{uZ_t}]= \exp \Big ( \int_0^t \int_{(0,\infty)}\Big(e^{\xi\psi(s,u)}-1\Big)\nu(d\xi)ds\Big)
=II,
\end{equation*}
where
\begin{equation*}
\psi(t,u)=\frac{ue^{-at}}{1-\frac{\sigma^2}{2a}u(1-e^{-at})}.
\end{equation*}
One can notice that $II$ resembles the characteristic function of a compound poisson distribution.


\section{A lower bound for the transition densities of JCIR}
In this section we will find some conditions on the L\'evy measure $\nu$ of $(J_t,  t\ge0)$ such that an explicit lower bound for the transition densities of the JCIR process given in $(\ref{jcirx})$ can be derived. As a first step we note that the law of $X^x_t, t >0$, in $(\ref{jcirx})$ is absolutely continuous with respect to the Lebesgue measure and thus possesses a density function.

\begin{lemma}\label{edensity}
Consider the JCIR process $(X^x_t, t\geq 0)$ (started from $x \ge 0$) that is defined in $(\ref{jcirx})$. Then for any $t>0$ and $x\ge0$ the law of $X^x_t$ is absolutely continuous with respect to the Lebesgue measure and thus possesses a density function $p(t,x,y), \ y\ge0$.
\end{lemma}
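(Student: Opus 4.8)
The plan is to exploit the factorization of the characteristic function recorded in (\ref{eqcharacteristic})--(\ref{IandII}), which expresses the law of $X^x_t$ as a convolution, and then to invoke the elementary fact that convolving an absolutely continuous probability measure with an arbitrary probability measure yields an absolutely continuous measure.

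First I would observe that, for every $u \in \mathcal{U}$,
\[
E[e^{uX^x_t}] = I \cdot II = E[e^{uY^x_t}] \cdot E[e^{uZ_t}],
\]
where the two factors are the characteristic function of the CIR process $Y^x_t$ from (\ref{eqcharaCIR}) and that of the process $Z_t$ introduced in Special case ii). Restricting to the imaginary axis $u = is$, $s \in \mathbb{R}$ (which lies in $\mathcal{U}$), this is a genuine product of characteristic functions. Since $Y^x_t$ and $Z_t$ have well-defined laws $\mu_Y$ and $\mu_Z$ on $[0,\infty)$, the uniqueness theorem for characteristic functions then gives that the law $\mu_X$ of $X^x_t$ equals the convolution $\mu_Y * \mu_Z$.

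Next I would recall that the law $\mu_Y$ of the CIR marginal $Y^x_t$ is absolutely continuous with respect to Lebesgue measure, with density $f(t,x,\cdot)$ given by (\ref{cirdensity}) when $x>0$ and by $p(t,0,\cdot)$ in (\ref{cirdensity0}) when $x=0$. The standard argument then shows that $\mu_X = \mu_Y * \mu_Z$ is absolutely continuous: for any Borel set $A$, using that $f(t,x,\cdot)$ vanishes on the negative half-line, the change of variables $z = y+\xi$, and Tonelli's theorem (all integrands being non-negative),
\[
\mu_X(A) = \int_{[0,\infty)} \int_{\mathbb{R}} \mathbf{1}_A(y+\xi)\, f(t,x,y)\, dy\, \mu_Z(d\xi) = \int_A \Big( \int_{[0,\infty)} f(t,x,z-\xi)\, \mu_Z(d\xi) \Big) dz.
\]
Hence $X^x_t$ possesses the density $p(t,x,y) = \int_{[0,\infty)} f(t,x,y-\xi)\, \mu_Z(d\xi)$, which also exhibits the convolution structure to be exploited later for the lower bound.

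The only point requiring a little care is the passage from the transform identity to the convolution of laws: one must read the identity on the imaginary axis, where $I$ and $II$ are honest characteristic functions, so that the uniqueness theorem applies. The exponential-affine formulas (\ref{psi})--(\ref{eqcharacteristic}) are valid on all of $\mathcal{U}$ and in particular there, so this is mere bookkeeping. I expect no substantial obstacle beyond it, since the absolute continuity of the CIR law is classical and the convolution argument is routine; the argument moreover covers all $x \ge 0$ uniformly, the case $x=0$ differing only in which CIR density formula is used.
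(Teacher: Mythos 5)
Your proposal is correct and follows essentially the same route as the paper's own proof: factor the characteristic function as $I\cdot II = E[e^{uY^x_t}]\cdot E[e^{uZ_t}]$, identify the law of $X^x_t$ as the convolution $\mu_{Y^x_t}\ast\mu_{Z_t}$, and use the absolute continuity of the CIR marginal to conclude. The only difference is that you spell out the details the paper leaves implicit (passing to the imaginary axis for the uniqueness theorem, and the Tonelli computation giving the explicit density $p(t,x,y)=\int_{[0,\infty)}f(t,x,y-\xi)\,\mu_{Z_t}(d\xi)$), which is a welcome but not substantive elaboration.
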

\begin{proof}
As shown in the previous section, it holds
\begin{equation*}
    E[e^{uX^x_t}]=I \cdot II=E[e^{uY^x_t}]\cdot E[e^{uZ_t}],
\end{equation*}
therefore the law of $X^x_t$, denoted by $\mu_{X^x_t}$, is the convolution of the laws of $Y^x_t$ and $Z_t$. Since $(Y^x_t, t\geq 0)$ is the well-known CIR process and has transition density functions $f(t,x,y), t>0, x,y\ge0$ with respect to the Lebesgue measure, thus $\mu_{X^x_t}$ is also absolutely continuous with respect to the Lebesgue measure and possesses a density function.

\end{proof}

In order to get a lower bound for the transition densities of the JCIR process we need the following lemma.
\begin{lemma}\label{theorem1}
Suppose that $\int_{(0,1)} \xi \ln (\frac{1}{\xi})  \nu(d\xi)<\infty$. Then II defined in (\ref{IandII}) is the characteristic function of a compound Poisson distribution. In particular, $P(Z_t=0)>0$ for all $t>0$.
\end{lemma}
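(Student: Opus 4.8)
The plan is to exhibit $II$ explicitly as the characteristic function of a compound Poisson law, i.e.\ to find a \emph{finite} measure $\rho$ on $(0,\infty)$ with $II=\exp\big(\int_{(0,\infty)}(e^{uy}-1)\,\rho(dy)\big)$. The finiteness of $\rho$ is precisely where the hypothesis $\int_{(0,1)}\xi\ln(1/\xi)\,\nu(d\xi)<\infty$ enters, and once $\rho$ is finite the assertion $P(Z_t=0)>0$ is immediate: a compound Poisson distribution supported on $(0,\infty)$ can reach the origin only through the empty jump configuration, so its atom at $0$ carries mass $e^{-\rho((0,\infty))}>0$.

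First I would reinterpret the inner exponent in (\ref{IandII}). Setting $\theta=0$ in (\ref{eqcharaCIR}) shows that for fixed $s>0$ and $\xi>0$ the map $u\mapsto e^{\xi\psi(s,u)}$ is the characteristic function of $Y^{\xi}_s$, the CIR process (\ref{cir}) with zero mean-reversion level started from $\xi$; denote its law by $q_s(\xi,\cdot)$, a probability measure on $[0,\infty)$. Letting $u\to-\infty$ along the negative real axis in (\ref{psi}) gives $\psi(s,u)\to-c(s)$ with $c(s):=\frac{2ae^{-as}}{\sigma^2(1-e^{-as})}$, whence the atom $q_s(\xi,\{0\})=e^{-c(s)\xi}$. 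Since $y=0$ contributes nothing to $e^{uy}-1$, we get $e^{\xi\psi(s,u)}-1=\int_{(0,\infty)}(e^{uy}-1)\,q_s(\xi,dy)$, so that after defining
\[
\rho(A):=\int_0^t\int_{(0,\infty)} q_s(\xi,A)\,\nu(d\xi)\,ds,\qquad A\subseteq(0,\infty)\ \text{Borel},
\]
Fubini's theorem (legitimate once $\rho$ is finite, as $|e^{uy}-1|\le 2$ for $u\in\mathcal U$, $y\ge0$) rewrites $\log II$ as $\int_{(0,\infty)}(e^{uy}-1)\,\rho(dy)$, the desired compound Poisson form.

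The crux is therefore to bound the total mass
\[
\rho\big((0,\infty)\big)=\int_0^t\int_{(0,\infty)}\big(1-e^{-c(s)\xi}\big)\,\nu(d\xi)\,ds .
\]
The contribution of $\{\xi\ge1\}$ is at most $t\,\nu([1,\infty))<\infty$ by (\ref{levy measure}). For the small jumps I would use $1-e^{-c(s)\xi}\le\min(1,c(s)\xi)$ together with the fact that $c(s)=\frac{2a}{\sigma^2(e^{as}-1)}$ is decreasing and blows up like $\frac{2}{\sigma^2 s}$ as $s\to0^+$. Splitting the $s$-integral at the value $s_0(\xi)=\frac1a\ln\!\big(1+\frac{2a\xi}{\sigma^2}\big)$ where $c(s)\xi=1$, and integrating $c(s)$ explicitly (its antiderivative is $\frac{2}{\sigma^2}\ln(1-e^{-as})$), one obtains $\int_0^t(1-e^{-c(s)\xi})\,ds\le C\,\xi\ln(1/\xi)$ for small $\xi$, with $C$ depending only on $a,\sigma,t$. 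Hence the $\{\xi<1\}$ part is dominated by $C\int_{(0,1)}\xi\ln(1/\xi)\,\nu(d\xi)<\infty$.

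I expect this last asymptotic estimate to be the main obstacle: it is exactly the near-zero singularity $c(s)\sim 1/s$ that produces the logarithmic factor, so the integrability threshold $\xi\ln(1/\xi)$ matches the hypothesis only under a careful split-and-integrate argument rather than a crude bound. Once $\rho((0,\infty))<\infty$ is secured, $II$ is a genuine compound Poisson characteristic function, and reading off its atom at the origin yields $P(Z_t=0)=e^{-\rho((0,\infty))}>0$ for every $t>0$, completing the proof.
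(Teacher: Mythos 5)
Your proof is correct, and it is structurally the same as the paper's: both arguments split the inner factor $e^{\xi\psi(s,u)}$ into an atom at the origin of mass $e^{-c(s)\xi}$ plus a sub-probability measure on $(0,\infty)$, aggregate the latter over $(s,\xi)$ against $ds\,\nu(d\xi)$ into the jump measure of a compound Poisson law, and reduce everything to the finiteness of $\int_0^t\int_{(0,\infty)}(1-e^{-c(s)\xi})\,\nu(d\xi)\,ds$ (the quantity the paper calls $\lambda$ in (\ref{lambda})), bounded by $C\,\xi\ln(1/\xi)$ for small $\xi$ and by a constant times $\nu([1,\infty))$ for large $\xi$. The differences are in execution and are worth noting. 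First, where the paper recognizes $\exp\big(\alpha u/(\beta-u)\big)$ as the characteristic function of a Bessel distribution (\ref{besseldistru}) and imports this identity from \cite[Lemma 3.1]{PRT}, you identify the same object probabilistically as the law of the $\theta=0$ CIR process started from $\xi$ (via (\ref{eqcharaCIR})) and obtain the atom $e^{-c(s)\xi}$ by letting $u\to-\infty$ in (\ref{psi}); this makes your argument self-contained, never requiring the explicit Bessel density. Second, you work with the unnormalized finite measure $\rho$ and the representation $\exp\big(\int(e^{uy}-1)\rho(dy)\big)$, which avoids the paper's normalization by $\lambda$ (and the implicit need for $\lambda>0$ there). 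Third, your finiteness estimate splits the $s$-integral at the point $s_0(\xi)$ where $c(s)\xi=1$ and integrates $c(s)$ explicitly, whereas the paper substitutes $y=c(s)\xi$ and bounds the resulting integral $M(\xi)$; both routes produce the same $\xi\ln(1/\xi)$ threshold, and both (yours and the paper's) treat the intermediate range $\xi$ bounded away from $0$ and $\infty$ only implicitly, where it is covered by the standing assumption (\ref{levy measure}).
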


\begin{proof}
Since
\begin{align*}
E[e^{uZ_t}]= \exp \Big ( \int_0^t \int_{(0,\infty)}\Big(e^{\xi\psi(s,u)}-1\Big)\nu(d\xi)ds\Big)\\
= \exp \Big ( \int_0^t \int_{(0,\infty)}\displaystyle{\Big(e^{\frac{\xi ue^{-as}}{1-\frac{\sigma^2}{2a}.(1-e^{-as})u}}-1\Big)\nu(d\xi)ds\Big)},
\end{align*}
where $u \in \mathcal{U}$, we set
$$\Delta:= \int_0^t \int_{(0,\infty)}\Big(e^{\frac{\xi ue^{-as}}{1-\frac{\sigma^2}{2a}.(1-e^{-as})u}}-1\Big)\nu(d\xi)ds.$$
If we rewrite
\begin{equation}\label{eqess}
  \exp \Big(\frac{\xi e^{-as}u}{1-\frac{\sigma^2}{2a}\cdot (1-e^{-as})u} \Big) =  \exp \Big( \frac{\alpha u}{\beta -u} \Big ), \end{equation}
where
\begin{equation}\label{parameters}
\begin{cases} &\alpha  := \displaystyle{ \frac{2a\xi}{\sigma^2(e^{as}-1)} }>0\\  \\
&\beta : =\displaystyle{\frac{2ae^{as}}{\sigma^2( e^{as}-1)}}>0,
\end{cases}
\end{equation}
then we recognize that the right-hand side of (\ref{eqess}) is the characteristic function of a Bessel distribution with parameters $\alpha$ and $\beta$. Recall that a probability measure $\mu_{\alpha,\beta}$ on $\big (\mathbb{R}_+,\mathcal{B}(\mathbb{R}_+)\big)$ is called a Bessel distribution  with parameters $\alpha$ and $\beta$ if
\begin{equation}\label{besseldistru}
\mu_{\alpha,\beta}(dx)=e^{-\alpha}\delta_{0}(dx)+\beta e^{-\alpha-\beta x}\sqrt{\frac{\alpha}{\beta x}} \cdot I_1 (2\sqrt{\alpha \beta x})dx,
\end{equation}
where $\delta_0$ is the Dirac measure at the origin and $I_1$ is the modified Bessel function of the first kind, namely
\[
 I_1(r)=\frac{r}{2}\sum_{k=0}^{\infty}\frac{\big(\frac{1}{4}r^2\big)^k}{k ! (k+1)!}, \qquad r \in \mathbb{R}.
\]
For more properties of Bessel distributions we refer the readers to section $3$ of \cite{PRT}.  Let $\displaystyle{\hat{\mu}_{\alpha,\beta}}$ denote the characteristic function of the Bessel distribution $\displaystyle{\mu_{\alpha,\beta}}$ with parameters $\alpha$ and $\beta$ which are defined in (\ref{parameters}). It follows from \cite[Lemma 3.1]{PRT} that
\[
\displaystyle{\hat{\mu}_{\alpha,\beta}(u)}=\exp \Big( \frac{\alpha u}{\beta -u} \Big )=\exp \Big(\frac{\xi e^{-as}u}{1-\frac{\sigma^2}{2a}\cdot (1-e^{-as})u} \Big).
\]
Therefore
\begin{eqnarray*}
  \Delta &=& \int_0^t \int_{(0,\infty)} \big(\hat{\mu}_{\alpha,\beta}(u)-1\big)\nu(d\xi)ds \\
   &=&  \int_0^t \int_{(0,\infty)}\Big(e^{\frac{\alpha u}{\beta-u}}-e^{-\alpha}+e^{-\alpha}-1\Big)\nu(d\xi)ds.
\end{eqnarray*}
Set
\begin{eqnarray}\label{lambda}
  \lambda & :=&   \int_0^t \int_{(0,\infty)} \big(1-e^{-\alpha}\big)  \nu(d\xi)ds \nonumber \\
   &=&  \int_0^t \int_{(0,\infty)}  \Big(1-e^{-\frac{2a\xi}{\sigma^2(e^{as}-1)}}\Big)   \nu(d\xi)ds.
\end{eqnarray}
If $\lambda < \infty$, then
\begin{eqnarray*}
  \Delta &=& \int_0^t \int_{(0,\infty)}  \Big(e^{\frac{\alpha u}{\beta-u}}-e^{-\alpha}\Big)  \nu(d\xi)ds-\lambda \\
   &=& \lambda \Big(\frac{1}{\lambda}\int_0^t \int_{(0,\infty)}  \big(e^{\frac{\alpha u}{\beta-u}}-e^{-\alpha}\big)  \nu(d\xi)ds-1\Big).
\end{eqnarray*}
The fact that $\lambda<\infty$ will be shown later in this proof. Next we show that the term $\displaystyle{\frac{1}{\lambda}\int_0^t \int_{(0,\infty)}  \big(e^{\frac{\alpha u}{\beta-u}}-e^{-\alpha}\big)  \nu(d\xi)ds}$ can be viewed as the characteristic function of a probability measure $\rho$, which we now construct as mixtures of the following measures
\begin{equation*}
    m_{\alpha,\beta}(dx):=\beta e^{-\alpha-\beta x}\sqrt{\frac{\alpha}{\beta x}} \cdot I_1 (2\sqrt{\alpha \beta x})dx, \ x\geq 0,
\end{equation*}
where $I_1$ is the modified Bessel function of the first kind. Noticing that the measure $m_{\alpha,\beta}$ is the absolute continuous component of the measure $\mu_{\alpha,\beta}$ in (\ref{besseldistru}), we easily get
 \[
 \displaystyle{\hat{m}_{\alpha,\beta}(u)=\hat{\mu}_{\alpha,\beta}(u)-e^{-\alpha}=e^{\frac{\alpha u}{\beta-u}}-e^{-\alpha}},
 \]
 where $\hat{m}_{\alpha,\beta}(u):= \int_0^{\infty}e^{ux}m_{\alpha, \beta}(dx)$ for $u\in \mathcal{U}$. Since $\alpha$ and $\beta$ are functions in $\xi$ and $s$, we can define a measure $\rho$ on $\mathbb{R}_+$ as follows:
\begin{equation*}
    \rho := \frac{1}{\lambda}\int_0^t \int_{(0,\infty)}   m_{\alpha,\beta}  \  \nu(d\xi)ds.
\end{equation*}
By the definition of the constant $\lambda$ in (\ref{lambda}) we get
\begin{eqnarray*}
  \rho(\mathbb{R}_+) &=& \frac{1}{\lambda}\int_0^t \int_{(0,\infty)}   m_{\alpha,\beta}(\mathbb{R}_+) \nu(d\xi)ds \\
   &=& \frac{1}{\lambda}\int_0^t \int_{(0,\infty)}   (1-e^{-\alpha}) \nu(d\xi)ds \\
   &=& 1
\end{eqnarray*}
i.e. $\rho$ is a probability measure on $\mathbb{R}_+$ and for $u \in \mathcal{U}$
\begin{eqnarray*}
  \hat{\rho}(u) &=& \int_{(0,\infty)}e^{ux}\rho(dx) \\
   &=&  \frac{1}{\lambda}\int_0^t \int_{(0,\infty)}   (e^{\frac{\alpha u}{\beta-u}}-e^{-\alpha}) \nu(d\xi)ds.
\end{eqnarray*}
thus $\displaystyle{\Delta=\lambda(\hat{\rho}(u)-1)}$ and \ $\displaystyle{E[e^{uZ_t}]=e^{\lambda(\hat{\rho}(u)-1)}}$ is the characteristic function of a compound Poisson distribution. \\
Now we verify that $\lambda < \infty$. Since
\begin{eqnarray*}
 \lambda  &=&   \int_0^t \int_{(0,\infty)} \big(1-e^{-\alpha}\big)  \nu(d\xi)ds \\
   &=&  \int_0^t \int_{(0,\infty)}  \Big(1-e^{-\frac{2a\xi}{\sigma^2(e^{as}-1)}}\Big)   \nu(d\xi)ds \\
   &=&  \int_{(0,\infty)} \int_0^t \Big(1-e^{-\frac{2a\xi}{\sigma^2(e^{as}-1)}}\Big)   ds\nu(d\xi)
\end{eqnarray*}
We introduce the change of variables \ $\displaystyle{\frac{2a\xi}{\sigma^2(e^{as}-1)}}:=y$ \ and then get
\begin{eqnarray*}
  dy &=& - \frac{2a\xi}{\sigma^2(e^{as}-1)^2} \cdot ae^{as} ds\\
   &=& -y^2 \frac{\sigma^2}{2\xi}(\frac{2a\xi}{\sigma^2y}+1)ds.
\end{eqnarray*}
Therefore
\begin{eqnarray*}
  \lambda &=& \int_{(0,\infty)}  \nu(d\xi) \int_{\infty}^{\frac{2a\xi}{\sigma^2(e^{at}-1)}}(1-e^{-y})\frac{-2\xi}{2a\xi y+\sigma^2y^2} dy\\
   &=& \int_{(0,\infty)}  \nu(d\xi) \int^{\infty}_{\frac{2a\xi}{\sigma^2(e^{at}-1)}}(1-e^{-y})\frac{2\xi}{2a\xi y+\sigma^2y^2}dy\\
   &=&  \int_{(0,\infty)}  \nu(d\xi) \int^{\infty}_{\frac{\xi}{\delta}}(1-e^{-y})\frac{2\xi}{2a\xi y+\sigma^2y^2}dy,
\end{eqnarray*}
where $\displaystyle{\delta:=\frac{\sigma^2(e^{at}-1)}{2a}}$. Define
\begin{equation*}
    M(\xi):=\int^{\infty}_{\frac{\xi}{\delta}}(1-e^{-y})\frac{2\xi}{2a\xi y+\sigma^2y^2}dy,
\end{equation*}
 then $M(\xi)$ is continuous on $(0,\infty)$ and as $\xi\rightarrow 0$ we get
 \begin{eqnarray*}
   M(\xi)&=&\int^{1}_{\frac{\xi}{\delta}}(1-e^{-y})\frac{2\xi}{2a\xi y+\sigma^2y^2}dy+ 2\xi\int_1^\infty(1-e^{-y})\frac{dy}{2a\xi y+\sigma^2y^2}\\
    &\leq &  2\xi \int^{1}_{\frac{\xi}{\delta}}\frac{y}{2a\xi y+\sigma^2y^2}dy+2\xi\int_1^\infty\frac{1}{2a\xi y+\sigma^2y^2}dy\\
    &\le&  2\xi\int^{1}_{\frac{\xi}{\delta}}\frac{1}{2a\xi +\sigma^2y}dy+2\xi\int_1^\infty\frac{1}{2a\xi y+\sigma^2y^2}dy.
 \end{eqnarray*}
For $y\in[\frac{\xi}{\delta},1]$ we have
\begin{eqnarray*}
  2\xi\int^{1}_{\frac{\xi}{\delta}}\frac{1}{2a\xi +\sigma^2y}dy &=& \frac{2\xi}{\sigma^2}\Big[\ln(2a\xi+\sigma^2y)\Big]_{\frac{\xi}{\delta}}^{1}\\
   &=& \frac{2\xi}{\sigma^{2}}\ln(2a\xi+\frac{\sigma^2\xi}{\delta})-\ln(2a\xi+\sigma^2) \\
   &=&  c_1\xi+c_2\xi\ln(\frac{1}{\xi})\leq c_3\xi\ln(\frac{1}{\xi})
\end{eqnarray*}
and as $\xi\rightarrow\infty$,
\begin{eqnarray*}
  M(\xi) &\leq& \int^{\infty}_{\frac{\xi}{\delta}}(1-e^{-y})\frac{2\xi}{2a\xi y+\sigma^2y^2}dy \\
   &\leq&  \int^{\infty}_{\frac{\xi}{\delta}}\frac{2\xi}{2a\xi y+\sigma^2y^2}dy \leq 2\xi\int^{\infty}_{\frac{\xi}{\delta}}\frac{1}{\sigma^2y^2}  \\
   &=& 2\xi\frac{1}{\sigma^2}\int^{\infty}_{\frac{\xi}{\delta}}d(-\frac{1}{y}) = \frac{2\xi}{\sigma^2}\Big[-\frac{1}{y}\Big]_{\frac{\xi}{\delta}}^{\infty}\\
   &=& \frac{2\xi}{\sigma^2}\cdot \frac{\delta}{\xi}=\frac{2\delta}{\sigma^2}:=c_4<\infty.
\end{eqnarray*}
We yield
\begin{equation*}
    \lambda \leq c_3 \int_0^1\xi\ln(\frac{1}{\xi})\nu(d\xi)+c_4 \int^{\infty}_1 1 \nu(d\xi) < \infty .
\end{equation*}
\end{proof}

With the help of the previous lemma we can easily prove the following proposition.
\begin{proposition}\label{theo2}
Let $p(t,x,y), \ t>0, x, y\ge0$ denote the transition densities of the JCIR process $(X^x_t, t\geq 0)$ defined in $(\ref{jcirx})$. Suppose that $\int_{(0,1)} \xi \ln (\frac{1}{\xi}) \nu(d\xi)<\infty$, then
for all $t>0, x, y\ge0$ we have
\[
p(t,x,y) \ge C(t)  f(t,x,y),\]
where $C(t)>0$ for all $t>0$ and $f(t,x,y)$ are transition densities of the CIR process (without jumps).
\end{proposition}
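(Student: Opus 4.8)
The plan is to exploit the convolution structure already established in Lemma~\ref{edensity} together with the atom at the origin provided by Lemma~\ref{theorem1}. Recall that $\mu_{X^x_t} = \mu_{Y^x_t} * \mu_{Z_t}$, where $\mu_{Y^x_t}$ is the law of the CIR process and carries the density $f(t,x,\cdot)$, while $\mu_{Z_t}$ is the law of $Z_t$. The hypothesis $\int_{(0,1)} \xi \ln(1/\xi)\, \nu(d\xi) < \infty$ is exactly the one under which Lemma~\ref{theorem1} guarantees that $\mu_{Z_t}$ is a compound Poisson distribution with $P(Z_t = 0) > 0$.

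First I would set $C(t) := P(Z_t = 0)$ and split the law of $Z_t$ into its atomic part at the origin and the remainder,
\[
\mu_{Z_t} = C(t)\, \delta_0 + \widetilde{\mu}_{Z_t},
\]
where $\widetilde{\mu}_{Z_t}$ is a finite non-negative (sub-probability) measure on $[0,\infty)$. Since $C(t) > 0$ by Lemma~\ref{theorem1}, this decomposition has a genuinely positive atomic weight at zero.

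Next I would carry out the convolution term by term. Using the linearity of convolution and the identity $\mu_{Y^x_t} * \delta_0 = \mu_{Y^x_t}$, one obtains
\[
\mu_{X^x_t} = C(t)\, \mu_{Y^x_t} + \mu_{Y^x_t} * \widetilde{\mu}_{Z_t}.
\]
The first summand has density $C(t) f(t,x,y)$. The second summand, being the convolution of the absolutely continuous measure $\mu_{Y^x_t}$ with the finite non-negative measure $\widetilde{\mu}_{Z_t}$, is again absolutely continuous, with a non-negative density
\[
g(t,x,y) := \int_{[0,\infty)} f(t,x,y-z)\, \widetilde{\mu}_{Z_t}(dz),
\]
where $f(t,x,\cdot)$ is extended by zero on the negative half-line. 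Comparing densities then yields
\[
p(t,x,y) = C(t) f(t,x,y) + g(t,x,y) \ge C(t) f(t,x,y),
\]
which is the claimed bound, with $C(t) = P(Z_t = 0) > 0$ for every $t > 0$.

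The argument is essentially bookkeeping once Lemma~\ref{theorem1} is available; the only point requiring genuine care is the justification that $\mu_{Y^x_t} * \widetilde{\mu}_{Z_t}$ is itself absolutely continuous, so that one may legitimately compare densities for Lebesgue-almost every $y$. This holds because the convolution of any finite measure with an absolutely continuous one is again absolutely continuous, and I would state this step explicitly rather than leave it implicit. Positivity of $C(t)$ for \emph{all} $t>0$ is inherited directly from Lemma~\ref{theorem1}, so no separate estimate is needed.
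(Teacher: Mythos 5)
Your proposal is correct and follows essentially the same route as the paper: both rest on the convolution identity $\mu_{X^x_t}=\mu_{Y^x_t}*\mu_{Z_t}$ and on the atom $C(t)=P(Z_t=0)>0$ supplied by Lemma~\ref{theorem1}. The only cosmetic difference is bookkeeping—the paper bounds $\mu_{X^x_t}(A)$ from below for arbitrary Borel sets $A$ by restricting the convolution integral to $\{0\}$, whereas you decompose $\mu_{Z_t}=C(t)\,\delta_0+\widetilde{\mu}_{Z_t}$ and compare densities directly; these are the same argument.
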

\begin{proof}
According to Lemma (\ref{theorem1}), we have $P(Z_t=0)>0$. Define
\begin{equation*}
    C(t):=P(Z_t=0).
\end{equation*}
Since
\begin{equation*}
    E[e^{uX^x_t}]=I \cdot II=E[e^{uY^x_t}]\cdot E[e^{uZ_t}],
\end{equation*}
the law of $X^x_t$, denoted by $\mu_{X^x_t}$, is the convolution of the laws of $Y^x_t$ and $Z_t$.
Thus for all $ A \in \mathcal{B}(\mathbb{R}_+)$
\begin{eqnarray*}
  \mu_{X^x_t}(A) &=& \int_{\mathbb{R}_+}\mu_{Y^x_t}(A-y)\mu_{Z_t}(dy) \\
   &\geq& \int_{\{0\}}\mu_{Y^x_t}(A-y)\mu_{Z_t}(dy) \\
   &\ge& \mu_{Y^x_t}(A-\{0\})\mu_{Z_t}(\{0\})\\
 &\ge&  C(t) \cdot \mu_{Y^x_t}(A) \\
   &\ge& C(t)  \int_A f(t,x,y)dy,
\end{eqnarray*}
where $f(t,x,y)$ are the transition densities of the classical CIR process given in (\ref{cirdensity}). Since $ A \in \mathcal{B}(\mathbb{R}_+)$ is arbitrary, we yield
\[
p(t,x,y) \ge C(t)  f(t,x,y)\]
for all $t>0, x, y\ge0$.
\end{proof}


\section{Exponential ergodicity of JCIR}
In this section we find some sufficient conditions such that the JCIR process is exponentially ergodic. Our method relies on the lower bound of the transition densities we have derived in the previous section and the existence of a Forster-Lyapunov function.
\begin{lemma}\label{lyapunov}
Suppose that $\int_{(1,\infty)}\xi\nu(d\xi)<\infty.$ Then the function $V(x)=x$, $x\geq 0,$ is a Forster-Lyapunov function for the JCIR process defined in (\ref{jcirx}), in the sense that for all $t>0$, $x\geq 0$,
\[
E[V(X^x_t)] \le e^{-at}V(x)+M,
\]
where $0<M<\infty$ is a constant.
\end{lemma}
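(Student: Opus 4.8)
The plan is to compute the expectation $E[V(X^x_t)] = E[X^x_t]$ directly by exploiting the explicit characteristic function already derived in Section 2. Since $V(x)=x$, the quantity $E[X^x_t]$ is simply the first moment of $X^x_t$, which can be extracted from the characteristic function $E[e^{uX^x_t}]$ by differentiating with respect to $u$ and evaluating at $u=0$ (taking the appropriate one-sided derivative along the real axis, since $\mathcal{U}=\{\Re u\le 0\}$). Because the law of $X^x_t$ is the convolution of the laws of $Y^x_t$ and $Z_t$ (as established in Lemma \ref{edensity}), we have the clean additive decomposition $E[X^x_t]=E[Y^x_t]+E[Z_t]$, so I would treat the two terms separately.

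First I would handle the CIR part. Differentiating $I$ from \eqref{IandII} at $u=0$, or simply recalling the classical first-moment formula for the CIR process, gives $E[Y^x_t]=xe^{-at}+\theta(1-e^{-at})$. This already contributes the desired $e^{-at}V(x)=xe^{-at}$ term, together with the bounded contribution $\theta(1-e^{-at})\le\theta$. Next I would compute $E[Z_t]$ by differentiating $II=\exp\big(\int_0^t\int_{(0,\infty)}(e^{\xi\psi(s,u)}-1)\,\nu(d\xi)\,ds\big)$ at $u=0$. Using $\psi(0,\cdot)$-behavior, namely that $\partial_u\psi(s,u)\big|_{u=0}=e^{-as}$ (read off from \eqref{psi}), the chain rule yields
\[
E[Z_t]=\int_0^t\int_{(0,\infty)}\xi e^{-as}\,\nu(d\xi)\,ds
=\Big(\int_{(0,\infty)}\xi\,\nu(d\xi)\Big)\cdot\frac{1-e^{-at}}{a}.
\]

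The finiteness of $\int_{(0,\infty)}\xi\,\nu(d\xi)$ is where the hypotheses enter: the assumption $\int_{(1,\infty)}\xi\,\nu(d\xi)<\infty$ controls the large jumps, while the standing assumption \eqref{levy measure}, $\int_{(0,\infty)}(\xi\wedge 1)\,\nu(d\xi)<\infty$, controls the small jumps. Together these give $\int_{(0,\infty)}\xi\,\nu(d\xi)<\infty$, so $E[Z_t]\le\frac{1}{a}\int_{(0,\infty)}\xi\,\nu(d\xi)=:M_0<\infty$ uniformly in $t$. Setting $M:=\theta+M_0$ then yields the claimed bound $E[X^x_t]\le e^{-at}x+M$ for all $t>0$, $x\ge 0$.

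The main obstacle is making the differentiation-under-the-integral-sign and the moment-extraction rigorous, since $u$ is restricted to $\mathcal{U}=\{\Re u\le 0\}$ and we are differentiating at the boundary point $u=0$. I would justify this either by a dominated-convergence argument bounding the difference quotients $\frac{e^{\xi\psi(s,u)}-1}{u}$ uniformly for real $u\uparrow 0$ (using $|\psi(s,u)|\le |u|e^{-as}$ and $\xi\wedge 1$ integrability for the small-$\xi$ regime), or, more cleanly, by invoking the standard affine-process moment formulas: for affine processes the finiteness of the relevant $\nu$-moment guarantees that $E[X^x_t]$ exists and is obtained by differentiating the Riccati-based representation, giving $E[X^x_t]=xe^{-at}+\partial_u\phi(t,0)$. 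Computing $\partial_u\phi(t,0)$ from \eqref{phi} reproduces $\theta(1-e^{-at})+\int_0^t\int_{(0,\infty)}\xi e^{-as}\nu(d\xi)\,ds$, completing the estimate.
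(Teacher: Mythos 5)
Your proposal is correct and follows essentially the same route as the paper's proof: decompose $E[X^x_t]=E[Y^x_t]+E[Z_t]$ via the convolution structure, read off $E[Y^x_t]=xe^{-at}+\theta(1-e^{-at})$, obtain $E[Z_t]=\frac{1-e^{-at}}{a}\int_{(0,\infty)}\xi\,\nu(d\xi)$ by differentiating $II$ at $u=0$, and use the standing assumption (\ref{levy measure}) together with $\int_{(1,\infty)}\xi\,\nu(d\xi)<\infty$ to get $\int_{(0,\infty)}\xi\,\nu(d\xi)<\infty$. The boundary-differentiation issue you flag is handled in the paper exactly as you anticipate, via Fatou's lemma applied to the difference quotients combined with the dominated bound $\partial_u\big(e^{\xi\psi(s,u)}-1\big)\le \xi e^{-as}$.
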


\begin{proof}
Since $\mu_{X^x_t}=\mu_{Y^x_t}\ast\mu_{Z_t}$, thus
\[
E[X^x_t]=E[Y^x_t]+E[Z_t].
\]
Since $(Y^x_t, t\geq 0)$ is the CIR process starting from $x$, it is known that $\mu_{Y^x_t}$ is a non-central Chi-squared distribution and thus $E[Y^x_t]<\infty$. Next we show that $E[Z_t]<\infty$.\\
Let $u\in(-\infty,0)$. Then by using Fatou's Lemma we get
\begin{eqnarray*}
  E[Z_t] &=& E[\displaystyle{\lim_{u\rightarrow 0}\frac{e^{uZ_t}-1}{u}}] \\
   &\leq& \displaystyle{\liminf_{u\rightarrow 0}}E[\frac{e^{uZ_t}-1}{u}] = \displaystyle{\liminf_{u\rightarrow 0}}\frac{E[e^{uZ_t}]-1}{u}
\end{eqnarray*}
Remember that
\begin{equation*}
    E[e^{uZ_t}]=II(u)= \exp \Big ( \int_0^t \int_{(0,\infty)}\Big(e^{\frac{\xi ue^{-as}}{1-\frac{\sigma^2}{2a}.(1-e^{-as})u}}-1\Big)\nu(d\xi)ds\Big)=e^{\Delta(u)},
\end{equation*}
then we have for all $u\leq 0$
\begin{eqnarray*}
  \frac{\partial}{\partial u}(\Big(e^{\frac{\xi ue^{-as}}{1-\frac{\sigma^2}{2a}.(1-e^{-as})u}}-1\Big)) &=& \frac{\xi e^{-as}(1-\frac{\sigma^2}{2a} \cdot (1-e^{-as})u)-\xi ue^{-as}(\frac{\sigma^2}{2a}\cdot(1-e^{-as}))}{(1-\frac{\sigma^2}{2a}\cdot (1-e^{-as})u)^2}\\
   & & \cdot  \ \displaystyle{e^{\frac{\xi ue^{-as}}{1-\frac{\sigma^2}{2a}.(1-e^{-as})u}}}\\
   &\leq& \frac{\xi e^{-as}}{(1-\frac{\sigma^2}{2a}.(1-e^{-as})u)^2} \leq \xi e^{-as}
\end{eqnarray*}
and further
\begin{equation*}
    \int_0^t\int_{(0,\infty)}\xi e^{-as}\nu(d\xi)ds <\infty.
\end{equation*}
Thus $\Delta(u)$ is differentiable in $u$ and
\begin{equation*}
    \Delta^\prime(0)=\int_0^t\int_{(0,\infty)}\xi e^{-as}\nu(d\xi)ds =\frac{1-e^{-at}}{a}\int_{(0,\infty)}\xi \nu(d\xi).
\end{equation*}
It follows that
\begin{eqnarray*}
     E[Z_t] &\leq& \displaystyle{\liminf_{u\rightarrow 0}}\frac{II(u)-II(0)}{u}  \\
   &=& II^\prime(0)= e^{\Delta(0)} \cdot \Delta^\prime(0)\\
   &=&  \frac{1-e^{-at}}{a}\int_{(0,\infty)}\xi \nu(d\xi).
\end{eqnarray*}
Therefore under the assumption $\displaystyle{\int_{(0,\infty)}\xi\nu(d\xi)<\infty}$ we have proved that $E[Z_t]<\infty$.
Furthermore
\begin{equation*}
    E[Z_t]=\frac{\partial}{\partial u}\Big(E[e^{uZ_t}]\Big)\Big|_{ u=0}=\frac{1-e^{-at}}{a}\int_{(0,\infty)}\xi \nu(d\xi).
\end{equation*}
On the other hand
 \begin{align*}
E[e^{uY^x_t}]=&  \big(1-\frac{\sigma^2}{2a}u(1-e^{-at})\big)^{-\frac{2a\theta}{\sigma^2}} \cdot \exp \Big(\frac{xue^{-at}}{1-\frac{\sigma^2}{2a}u(1-e^{-at})} \Big),
\end{align*}
with a similar argument as above we get
\begin{equation*}
    E[Y^x_t]=\frac{\partial}{\partial u}\Big(E[e^{uY^x_t}]\Big)\Big|_{ u=0}= \theta(1-e^{-at})+xe^{-at}.
\end{equation*}
Altogether we get
\begin{eqnarray*}
  E[X^x_t] &=& E[Y^x_t]+E[Z_t]\\
   &=& (1-e^{-at})\big(\theta+\frac{1-e^{-at}}{a}\big) + xe^{-at} \\
   &\leq& \theta + \frac{1}{a}+ xe^{-at} ,
\end{eqnarray*}
namely
\begin{equation*}
      E[V(X^x_t)]\leq \theta + \frac{1}{a}+ e^{-at}V(x).
\end{equation*}
\end{proof}

\begin{remark}
If $\int_{(1,\infty)}\xi\nu(d\xi)<\infty$, then it follows from \cite[Theorem 3.16]{MR2779872} and \cite[Proposition 3.1]{MR2390186} the existence and uniqueness of an invariant probability  measure $\pi$ for the JCIR process.
\end{remark}
Let $\|\cdot\|_{TV}$ denote the total-variation norm for signed measures on $\mathbb{R}_+$, namely
\[
\|\mu\|_{TV}=\sup_{A \in \mathcal{B}(\mathbb{R}_+)} \{|\mu(A)| \}.\]

Let $P^{t}(x,\cdot):=P(X^x_t\in \cdot)$ be the distribution of the JCIR process at time $t$ started from the  initial point $x \ge 0$.
Now we prove the main result of this paper.
\begin{theorem}
Assume that
\[
\int_{(1,\infty)} \xi  \ \nu(d\xi)<\infty \quad \mbox{and} \quad \int_{(0,1)} \xi \ln (\frac{1}{\xi}) \nu(d\xi)<\infty.
\]
Let $\pi$ be the unique invariant probability measure for the JCIR process. Then the JCIR process is exponentially ergodic, namely there exist constants $0<\beta <1 $ and $0<B<\infty$ such that
\begin{equation}\label{exerjcir}
\|P^{t}(x,\cdot) -\pi\|_{TV}\le B\big(x+1\big)\beta^t, \quad t \ge 0,  \quad x \in \mathbb{R}_{+}.
\end{equation}
\end{theorem}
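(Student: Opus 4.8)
The plan is to verify the hypotheses of the discrete-time geometric ergodicity theorem of Meyn and Tweedie \cite{MR1234294,MR1234295} for a skeleton chain of the JCIR process, and then to lift the resulting estimate to continuous time. Fix a sampling step $h>0$ and consider the chain $\Phi=(X^x_{nh})_{n\ge0}$. Setting $\tilde V(x):=1+x\ge 1$, Lemma \ref{lyapunov} immediately yields the geometric drift condition
\[
E\big[\tilde V(X^x_h)\big]=1+E[X^x_h]\le e^{-ah}\tilde V(x)+(M+1),
\]
so that $P^h\tilde V\le \rho\,\tilde V+L$ with $\rho:=e^{-ah}<1$ and $L:=M+1$. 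This controls the excursions of $\Phi$ towards infinity. What remains is to produce a petite set absorbing the sublevel sets $\{\tilde V\le K\}=[0,K-1]$ and to check $\psi$-irreducibility and aperiodicity.

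The technical heart of the argument is to show that every bounded interval $C_R:=[0,R]$ (with $R\ge 2$) is small, hence petite, for $\Phi$. Here I would use the density lower bound of Proposition \ref{theo2}: for the fixed time $h$ one has $p(h,x,y)\ge C(h)\,f(h,x,y)$ for all $x,y\ge0$, with $C(h)>0$, where $f$ is the CIR transition density of (\ref{cirdensity})--(\ref{cirdensity0}). Since $(x,y)\mapsto f(h,x,y)$ is continuous and strictly positive on $[0,\infty)\times(0,\infty)$ (the boundary value at $x=0$ being supplied by the separate formula (\ref{cirdensity0})), it is bounded below by some $\delta>0$ on the compact rectangle $[0,R]\times[1,2]$. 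Consequently, for every $x\in C_R$ and every Borel set $A\subseteq\mathbb{R}_+$,
\[
P^h(x,A)\ge C(h)\int_{A\cap[1,2]}f(h,x,y)\,dy\ge C(h)\,\delta\,\mathrm{Leb}\big(A\cap[1,2]\big).
\]
This is a one-step minorization $P^h(x,\cdot)\ge\varepsilon\,\varphi(\cdot)$, uniform over $x\in C_R$, with $\varphi$ the normalized Lebesgue measure on $[1,2]\subseteq C_R$, which exhibits $C_R$ as a small set. The same positivity of $p(t,x,\cdot)$ for every $t>0$ shows that $\Phi$ is irreducible with respect to Lebesgue measure on $\mathbb{R}_+$; and because $\varphi(C_R)>0$, the minorization also gives strong aperiodicity.

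With a geometric drift condition toward a petite set, together with irreducibility and aperiodicity, the geometric ergodicity theorem for discrete-time chains \cite{MR1174380,MR1234294} yields constants $\tilde B<\infty$ and $\beta_0\in(0,1)$ with
\[
\big\|P^{nh}(x,\cdot)-\pi\big\|_{TV}\le \tilde B\,\tilde V(x)\,\beta_0^{\,n}=\tilde B\,(1+x)\,\beta_0^{\,n},\qquad n\ge 0.
\]
Finally I would pass from the skeleton to continuous time. Writing $t=nh+r$ with $0\le r<h$ and using $P^t(x,\cdot)-\pi=\int P^r(x,dy)\big(P^{nh}(y,\cdot)-\pi\big)$ together with invariance of $\pi$, total-variation contractivity of $P^r$, and the bound $E[X^x_r]\le x+M$ from Lemma \ref{lyapunov} to absorb the remainder, one obtains
\[
\big\|P^t(x,\cdot)-\pi\big\|_{TV}\le \tilde B\,\beta_0^{\,n}\big(1+E[X^x_r]\big)\le \tilde B(1+M)(1+x)\,\beta_0^{\,n}.
\]
Setting $\beta:=\beta_0^{1/h}\in(0,1)$ and noting $\beta_0^{\,n}\le \beta_0^{-1}\beta^{t}$ gives (\ref{exerjcir}) with a suitably enlarged constant $B$. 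The main obstacle is the petiteness verification: the whole argument hinges on upgrading the qualitative lower bound of Proposition \ref{theo2} to a genuinely uniform-in-$x$ minorization, which is precisely why the continuity and the boundary behaviour at $x=0$ of the CIR density are needed to secure a strictly positive floor $\delta$ on a compact block of target points.
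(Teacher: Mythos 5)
Your proposal is correct, and it follows the same overall strategy as the paper: sample a skeleton chain, get the geometric drift condition from Lemma \ref{lyapunov}, use the density lower bound of Proposition \ref{theo2} to verify the minorization, irreducibility and aperiodicity hypotheses, and then invoke the Meyn--Tweedie discrete-time theory before lifting to continuous time. The two places where you genuinely deviate both work and are worth noting. First, where the paper verifies that compact sets are petite indirectly --- via the Feller property of the skeleton chain together with \cite[Theorem 3.4(ii)]{MR1174380}, plus a separate minorization on $[0,1]$ for strong aperiodicity --- you prove directly that every interval $[0,R]$ is small, through the uniform-in-$x$ minorization $P^h(x,\cdot)\ge C(h)\,\delta\,\mathrm{Leb}\big(\cdot\cap[1,2]\big)$ obtained from joint continuity and positivity of the CIR density on the compact rectangle $[0,R]\times[1,2]$. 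This is more self-contained (one minorization yields smallness, irreducibility and strong aperiodicity simultaneously), at the price of needing continuity of $f(h,x,y)$ jointly in $(x,y)$ rather than only in $x$ for fixed $y$ as the paper uses; that stronger claim is indeed available from the explicit Bessel-series form of (\ref{cirdensity}), whose $x\to 0$ limit matches (\ref{cirdensity0}). Second, the paper delegates the passage from the skeleton estimate to the continuous-time bound (\ref{exerjcir}) to \cite[p.536]{MR1234295}, whereas you carry it out explicitly: Chapman--Kolmogorov, contractivity of $P^r$ in total variation, the bound $E[X^x_r]\le x+M$ from Lemma \ref{lyapunov}, and the elementary estimate $\beta_0^{\,n}\le\beta_0^{-1}\beta^{t}$ with $\beta=\beta_0^{1/h}$; this is precisely the cited argument made concrete, and it is where the $(x+1)$ factor in (\ref{exerjcir}) is seen to survive the interpolation in $t$.
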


\begin{proof}
Basically, we follow the proof of \cite[Theorem 6.1]{MR1234295}. For any $\delta>0$ we consider the $\delta$-skeleton chain $\eta^x_n:=X^x_{n\delta},  \ n \in \mathbb{Z}_+$. Then $(\eta^x_n)_{ n \in \mathbb{Z}_+}$ is a Markov chain on the state space $\mathbb{R}_+$ with transition kernel $P^{\delta}(x,\cdot)$ and starting point $\eta^x_0=x$. It is easy to see that the measure $\pi$ is also an invariant probability measure for the chain $(\eta^x_n)_{ n \in \mathbb{Z}_+}$, $x \ge 0$.

Let $V(x)=x$, $x \ge 0$.  It follows from  the Markov property and Lemma \ref{lyapunov} that
\[
E[V(\eta^x_{n+1})|\eta^x_0, \eta^x_1, \cdots, \eta^x_n] = \int_{\mathbb{R}_+}V(y)P^{\delta}(\eta^x_n,dy) \le e^{-a \delta }V(\eta^x_n)+M,
\]
where $M$ is a positive constant. If we set $V_0:=V$ and $V_n:=V(\eta^x_n)$, $n\in \mathbb{N}$, then
\[
E[V_1] \le  e^{-a \delta }V_0(x)+M
\]
and
\[
E[V_{n+1}|\eta^x_0, \eta^x_1, \cdots, \eta^x_n]  \le e^{-a \delta }V_n+M, \quad n\in \mathbb{N}.
\]

Now we proceed to show that the chain $(\eta^x_n)_{ n \in \mathbb{Z}_+}$, $x \ge 0$, is $\lambda$-irreducible, strong aperiodic, and all compact subsets of $\mathbb{R}_+$ are petite for the chain $(\eta^x_n)_{ n \in \mathbb{Z}_+}$.

``$\lambda$-irreducibility": We show that the Lebesgue measure $\lambda$ on $\mathbb{R}_+$ is an irreducibility measure for $(\eta^x_n)_{ n \in \mathbb{Z}_+}$. Let $A\in \mathcal{B}(\mathbb{R}_+)$ and $\lambda(A)>0$, then it follows from Proposition \ref{theo2} that
\[
P[\eta^x_1 \in A|\eta^x_0 =x]=P(X^x_{\delta}\in A)\ge C(\delta) \int_{A}f(\delta,x,y)dy>0,
\]
since $f(\delta,x,y)>0$ for any $x \in \mathbb{R}_+$ and $y>0$. This shows that the chain $(\eta^x_n)_{ n \in \mathbb{Z}_+}$ is irreducible with $\lambda$ being an irreducibility measure.

``Strong aperiodicity" (see \cite[p.561]{MR1174380} for a definition):  To show the strong aperiodicity of $(\eta^x_n)_{n \in \mathbb{Z}_0}$, we need to find a set $B \in \mathcal{B}(\mathbb{R}_+)$,
a probability measure $m$ with $m(B)=1$, and $\epsilon >0$ such that
\begin{equation}\label{ergo1}
L(x,B)>0, \qquad x \in \mathbb{R}_+
\end{equation}
and
\begin{equation}\label{ergo2}
P(\eta^x_1\in A) \ge \epsilon \cdot m(A), \quad x \in C, \quad A \in \mathcal{B}(\mathbb{R}_+),
\end{equation}
where $L(x,B):=P(\eta^x_n \in B \ \rm{for} \ \rm{some} \ \it{n} \in \mathbb{N})$. To this end set $B:=[0,1]$ and $g(y):=\inf_{x \in [0,1]} f(\delta,x,y)$, $y>0$. Since for fixed $y>0$ the function $f(\delta,x,y)$ strictly positive and continuous in $x \in [0,1]$, thus we have $g(y)>0$ and $0<\int_{(0,1]}g(y)dy\le 1$. Define
\[
m(A):=\frac{1}{\int_{(0,1]}g(y)dy}\int_{A \cap(0,1]}g(y)dy, \qquad A \in \mathcal{B}(\mathbb{R}_+).
\]
Then for any $x\in[0,1]$ and $A \in \mathcal{B}(\mathbb{R}_+)$ we get
\begin{align*}
P(\eta^x_1\in A)=&P(X^x_{\delta}\in A)\\
\ge  &  C(\delta) \int_{A}f(\delta,x,y)dy  \ge C(\delta) \int_{A \cap(0,1]}g(y)dy =C(\delta)m(A) \int_{(0,1]}g(y)dy,
\end{align*}
so (\ref{ergo2}) holds with $\epsilon:=C(\delta)\int_{(0,1]}g(y)dy$.

Obviously
\[
L(x,[0,1]) \ge P(\eta^x_1\in [0,1])= P(X^x_{\delta} \in [0,1])\ge C(\delta) \int_{[0,1]}f(\delta,x,y)dy>0
\]
for all $x \in \mathbb{R}_+$, which verifies (\ref{ergo1}).

``Compact subsets are petite": We have shown that $\lambda$ is an irreducibility measure for $(\eta^x_n)_{ n \in \mathbb{Z}_+}$. According to \cite[Theorem 3.4(ii)]{MR1174380}, to show that all compact sets are petit, it suffices to prove the Feller property of $(\eta^x_n)_{ n \in \mathbb{Z}_+}$, $x \ge 0$, but this follows from the fact that $(\eta^x_n)_{ n \in \mathbb{Z}_+}$ is a skeleton chain of the JCIR process, which is an affine process and possess the Feller property.

According to \cite[Theorem 6.3]{MR1174380} (see also the proof of \cite[Theorem 6.1]{MR1174380}), the probability measure $\pi$ is the only invariant probability measure of the chain $(\eta^x_n)_{ n \in \mathbb{Z}_+}$, $x \ge 0$,  and there exist constants $\beta \in (0,1) $ and $C\in(0,\infty)$ such that
\[
\|P^{\delta n}(x,\cdot) -\pi\|_{TV}\le C\big(x+1\big)\beta^n, \quad   n \in \mathbb{Z}_+,  \quad x \in \mathbb{R}_{+}
\]

Then for the rest of the proof we can proceed as in \cite[p.536]{MR1234295} and get the inequality (\ref{exerjcir}).
\end{proof}

\par\bigskip\noindent
{\bf Acknowledgements.} The research was supported by the Research Programm "DAAD - Transformation: Kurzma{\ss}nahmen 2012/13 Programm". This research was also carried out with the support of CAS - Centre for Advanced Study, at the Norwegian Academy of Science and Letter, research program SEFE.

\bibliographystyle{amsplain}

\begin{thebibliography}{99}

\bibitem{MR785475}
John~C. Cox, Jonathan~E. Ingersoll, Jr., and Stephen~A. Ross, \emph{A theory of
  the term structure of interest rates}, Econometrica \textbf{53} (1985),
  no.~2, 385--407.

\bibitem{MR1994043}
D.~Duffie, D.~Filipovi{\'c}, and W.~Schachermayer, \emph{Affine processes and
  applications in finance}, Ann. Appl. Probab. \textbf{13} (2003), no.~3,
  984--1053.

\bibitem{Duffie01}
Darrell Duffie and Nicolae G\^{a}rleanu, \emph{Risk and valuation of
  collateralized debt obligations}, Financial Analysts Journal \textbf{57}
  (2001), no.~1, pp. 41--59 (English).

\bibitem{MR1850789}
Damir Filipovi{\'c}, \emph{A general characterization of one factor affine term
  structure models}, Finance Stoch. \textbf{5} (2001), no.~3, 389--412.
  \MR{1850789 (2002f:91041)}

\bibitem{MR3084047}
Damir Filipovi{\'c}, Eberhard Mayerhofer, and Paul Schneider, \emph{Density
  approximations for multivariate affine jump-diffusion processes}, J.
  Econometrics \textbf{176} (2013), no.~2, 93--111. \MR{3084047}

\bibitem{MR2584896}
Zongfei Fu and Zenghu Li, \emph{Stochastic equations of non-negative processes
  with jumps}, Stochastic Process. Appl. \textbf{120} (2010), no.~3, 306--330.
  \MR{2584896 (2011d:60178)}

\bibitem{PRT}
Peng Jin, Barbara R{\"u}diger, and Chiraz Trabelsi, \emph{{Positive Harris
  recurrence and exponential ergodicity of the basic affine jump-diffusion}},
  ArXiv e-prints (2015).

\bibitem{MR2779872}
Martin Keller-Ressel, \emph{Moment explosions and long-term behavior of affine
  stochastic volatility models}, Math. Finance \textbf{21} (2011), no.~1,
  73--98. \MR{2779872 (2012e:91126)}

\bibitem{MR2922631}
Martin Keller-Ressel and Aleksandar Mijatovi{\'c}, \emph{On the limit
  distributions of continuous-state branching processes with immigration},
  Stochastic Process. Appl. \textbf{122} (2012), no.~6, 2329--2345.
  \MR{2922631}

\bibitem{MR2851694}
Martin Keller-Ressel, Walter Schachermayer, and Josef Teichmann, \emph{Affine
  processes are regular}, Probab. Theory Related Fields \textbf{151} (2011),
  no.~3-4, 591--611. \MR{2851694 (2012k:60219)}

\bibitem{MR3040553}
\bysame, \emph{Regularity of affine processes on general state spaces},
  Electron. J. Probab. \textbf{18} (2013), no. 43, 17. \MR{3040553}

\bibitem{MR2390186}
Martin Keller-Ressel and Thomas Steiner, \emph{Yield curve shapes and the
  asymptotic short rate distribution in affine one-factor models}, Finance
  Stoch. \textbf{12} (2008), no.~2, 149--172. \MR{2390186 (2009c:60222)}

\bibitem{2013arXiv1301.3243L}
Z.~{Li} and C.~{Ma}, \emph{{Asymptotic properties of estimators in a stable
  Cox-Ingersoll-Ross model}}, ArXiv e-prints (2013).

\bibitem{MR1174380}
Sean~P. Meyn and R.~L. Tweedie, \emph{Stability of {M}arkovian processes. {I}.
  {C}riteria for discrete-time chains}, Adv. in Appl. Probab. \textbf{24}
  (1992), no.~3, 542--574. \MR{1174380 (93g:60143)}

\bibitem{MR1234294}
\bysame, \emph{Stability of {M}arkovian processes. {II}. {C}ontinuous-time
  processes and sampled chains}, Adv. in Appl. Probab. \textbf{25} (1993),
  no.~3, 487--517. \MR{1234294 (94g:60136)}

\bibitem{MR1234295}
\bysame, \emph{Stability of {M}arkovian processes. {III}. {F}oster-{L}yapunov
  criteria for continuous-time processes}, Adv. in Appl. Probab. \textbf{25}
  (1993), no.~3, 518--548. \MR{1234295 (94g:60137)}

\end{thebibliography}

\end{document}